\definecolor{mgre}{cmyk}{0.92,0.00,0.59,0.25}
\def\bE{{\mathbb{E}}}
\def\bP{{\mathbb{P}}}
\def\bR{{\mathbb{R}}}
\def\cA{{\mathcal{A}}}
\def\cD{{\mathcal{D}}}
\def\cE{{\mathcal{E}}}
\def\sB{{\mathscr{B}}}
\def\sF{{\mathscr{F}}}
\def\1{{\mathbf{1}}}
\def\<{\langle}
\def\>{\rangle}
\newtheorem{thm}{Theorem}[section]
\newtheorem{lem}{Lemma}[section]
\newtheorem{cor}{Corollary}[section]
\newtheorem{prop}{Proposition}[section]
\newtheorem{rem}{Remark}[section]
\newtheorem{exmp}{Example}[section]
\newtheorem{pro}{Problem}[section]
\newcommand{\bd}{\begin{defi}}
\newcommand{\ed}{\end{defi}}
\newcommand{\bpro}{\begin{pro}}
\newcommand{\epro}{\end{pro}}
\newcommand{\bec}{\begin{cases}}
\newcommand{\eec}{\end{cases}}
\newcommand{\bpr}{\begin{prob}}
\newcommand{\epr}{\end{prob}}
\newcommand{\bt}{\begin{thm}}
\newcommand{\et}{\end{thm}}
\newcommand{\ba}{\begin{ass}}
\newcommand{\ea}{\end{ass}}
\newcommand{\br}{\begin{rem}}
\newcommand{\er}{\end{rem}}
\newcommand{\bpm}{\begin{pmatrix}}
\newcommand{\epm}{\end{pmatrix}}
\newcommand{\be}{\begin{ex}}
\newcommand{\ee}{\end{ex}}
\newcommand{\bp}{\begin{prop}}
\newcommand{\ep}{\end{prop}}
\newcommand{\bl}{\begin{lem}}
\newcommand{\el}{\end{lem}}
\newcommand{\bc}{\begin{cor}}
\newcommand{\ec}{\end{cor}}
\newcommand{\bq}{\begin{que}}
\newcommand{\eq}{\end{que}}
\newcommand{\beqn}{\begin{eqnarray*}}
\newcommand{\eeqn}{\end{eqnarray*}}
\newcommand{\beqnn}{\begin{eqnarray}}
\newcommand{\eeqnn}{\end{eqnarray}}
\newcommand{\bequ}{\begin{equation}}
\newcommand{\eequ}{\end{equation}}
\newcommand{\benu}{\begin{enumerate}}
\newcommand{\eenu}{\end{enumerate}}
\newcommand{\barr}{\begin{array}{rcl}}
\newcommand{\ear}{\end{array}}
\newcommand{\la}{\label}
\begin{document}

\title [Existence and uniqueness of quasi-stationary distributions]{Existence and Uniqueness of Quasi-Stationary Distributions for  Symmetric Markov Processes with Tightness Property}

%    Information for first author
\author{Masayoshi Takeda}
%    Address of record for the research reported here
\address{Mathematical Institute,
Tohoku University, Aoba, Sendai, 980-8578, Japan}
%    Current address
\email{takeda@math.tohoku.ac.jp}
%    \thanks will become a 1st page footnote.
\thanks{The author was supported in part by Grant-in-Aid for Scientific
Research (No.26247008(A)) and Grant-in-Aid for Challenging Exploratory Research (No.25610018), Japan Society for the Promotion of Science.}

%    General info
\subjclass{60B10, 60J25, 37A30, 31C25}
%\date{\today}

\keywords{quasi-stationary distribution, symmetric Markov process}

\begin{abstract}
Let $X$ be an irreducible symmetric Markov process with the strong Feller property. We assume, in addition, that $X$ is explosive and 
has a tightness property. 
We then prove the existence and uniqueness of quasi-stationary distributions of $X$.
\end{abstract}

\maketitle

\section{Introduction}\label{ra_sec1}
Let $E$ be a locally compact separable metric space and $m$ a positive Radon measure on $E$ with full topological support. 
Let $X=(\Omega,X_t,\bP_x,\zeta)$ be an $m$-symmetric Markov process
(SMP for short) on $E$. Here $\zeta $ is the lifetime
of $X$.  We assume that the process $X$ is irreducible and strong Feller,
in addition, possesses a {\it tightness property}, i.e., for any $\epsilon>0$, there exists a compact set $K$ such that 
$\sup_{x\in E}R_11_{K^c}(x)\leq \epsilon$. Here $1_{K^c}$ is the indicator function of the complement of $K$ and $R_{1}$ 
is the 
1-resolvent of $X$. In this paper, we call the family of SMPs with these three properties {\it Class {(T)}}.
%$R_1f(x)=\bE_x(\int_0^{\infty}e^{-t}f(X_t)dt)$.  

We prove in \cite{T-C} that
if $X$ is in Class {\rm (T)}, then for any $\gamma>0$ there exists
a compact set $K$ such that 
$$
\sup_{x\in E}\bE_x(e^{\gamma \tau_{K^c}})<\infty,
$$
where $\tau_{K^c}$ is the first exit time from $K^c$. As a result, its transition operator $p_t$ is a compact operator on $L^2(E;m)$ and 
all its eigenfunctions have bounded continuous versions (\cite[Theorem 4.3, Theorem 5.4]{T-C}).
If $X$ in Class {\rm (T)} is not conservative, it explodes very fast
 in a sense that the lifetime is exponentially integrable (see (\ref{ee}) below). In particular, $X$ is {\it almost surely killed}, 
$
\bP_x(\zeta<\infty)=1$ for all $x\in E$. 
 The objective of this paper is to prove the existence and uniqueness of quasi-stationary distributions of explosive SMPs in Class (T).
%employing these properties proved in \cite{T-C}.

A probability measure $\nu$ on $E$ is said to be a \textit{quasi-stationary distribution} (QSD for short) of X, if for all $t \ge 0$ and all Borel 
subset $B$ of $E$
\bequ\la{quasi-s}
\nu(B) = \bP_{\nu}(X_t \in B \,|\, t<\zeta),
\eequ
that is, the distribution of $X_t$ conditioned to survive up to $t$ equals $\nu$ over time if the initial distribution $\nu$ is a QSD. 

Let $\phi_0$ be the smallest (principal) eigenfunction of $p_t$ with eigenvalue $\lambda_0$, $p_t\phi_0=e^{-\lambda_0}\phi_0$.
As stated above, we can suppose that $\phi_0$ is a
bounded continuous. Moreover, we can show that $\phi_0$ is strictly positive and integrable, $\phi_0\in L^1(E;m)$ (Lemma \ref{l1}). 
Hence we can define the 
probability measure $\nu^{\phi_0}$ by
$$
\nu^{\phi_0}(B) = \frac{\int_B \phi_0 \, dm}{\int_E \phi_0 \, dm},\ \ B\in\sB(E),
$$
where $\sB(E)$ denotes the totality of Borel subset of $E$. Our main result is as follows (Theorem \ref{main}): If $X$ is in Class (T), then $\nu^{\phi_0}$ is the unique QSD of $X$.

% by the Markov property. 
%We show in Theorem \ref{main} that the measure $\nu^{\phi_0}$ is a unique QSD of $X$.

For the proof of Theorem \ref{main}, the following fact is crucial: Every SMP can be transformed to an ergodic SMP by multiplicative functional.
More precisely, let 
${X}^{\phi_0}=(\Omega,X_t,\bP^{\phi_0}_x,\zeta)$ be the process  transformed by the multiplicative functional,
\begin{equation}
L^{\phi_0} _t=e^{\lambda_0 t}\frac{\phi_0 (X_t)}{\phi_0 (X_0)}1_{\{t<\zeta\} }.
\label{Lt}
\end{equation}
We then see from Lemma 6.3.2 in \cite{FOT} that $X^{\phi_0}
$ is an irreducible, conservative $\phi_0^2m$-SMP on $E$. 
We can prove that $\nu^{\phi_0}$ is a QSD using the $\phi_0^2m$
-symmetry and conservativeness of $X^{\phi_0}$ (Corollary \ref{nu}). 
Applying Fukushima's ergodic theorem (Theorem \ref{fuku-e} below)
 to $X^{\phi_0}$, we can prove that  $\nu^{\phi_0}$ is a unique QSD of $X$. Indeed,  
since $\phi_0$ is strictly positive, 
bounded continuous as remarked above, 
$$
\sup_{x\in E}\frac{1_K}{\phi_0}(x)\leq \frac{1}{\inf_{x\in K}\phi_0(x)}<\infty
$$
for any compact set $K$.  
Hence by Theorem \ref{fuku-e} and Corollary \ref{in-e}, we have
$$
\lim_{t\to\infty}{\mathbb E}^{\phi_0}_x\left(\frac{1_K}{\phi
	_0}(X_t)\right)=\int_K\phi_0dm,\ \ \forall x\in E,
$$
which leads us to the uniqueness of QSD (Theorem \ref{main}).
%There exists a lot of studies for one-dimensional diffusion processes (cf. \cite{CMS}). 

We know that a minimal one-dimensional diffusion process is
in Class (T)   if and only if no natural
boundaries in Feller's classification are present (Example \ref{one}).  
In \cite{CCLMM}, they treat a one-dimensional diffusion process on
$[0,\infty)$ defined as the solution of the SDE:
$$
dX_t=dB_t-q(X_t)dt
%, \ \ q(x)=\frac{1}{2x}-\frac{x}{2}+\frac{x^3}{8}
$$
whose boundaries $0$ and $\infty$ are exit and 
entrance respectively. Theorem \ref{main} says that one-dimensional diffusion processes without natural boundary have a unique 
QSD in general. 

We give two examples 
of multi-dimensional SMPs in Class (T), absorbing Brownian motions on domain {\it thin at infinity} and  
 killed Brownian motions on $\bR^d$, which are treated in \cite{TTT}. 

Finally, we remark that if the semigroup of an explosive symmetric
Markov processes in Class (T) is {\it intrinsic ultracontractive}, $\nu^{\phi_0}$
is a Yaglom limit: for any probability measure $\mu$
$$
\lim_{t\to\infty}\bP_\mu(X_t \in B\,|\,t<\zeta)=\nu^{\phi_0}(B).
$$ 
For example, let $X^D=(\bP^D_x,X_t,\tau_D)$ be an absorbing rotationally symmetric $\alpha$-stable process on bounded open set $D$,
where $0<\alpha<2$ and $\tau_D$ is the first exit time from $D$. We then see that 
$X^D$ is intrinsic ultracontractive (\cite{KT}), and thus 
$$
\lim_{t\to\infty}\bP^D_x\left(X_t\in B\,\mid\,t<\tau_D \right)=\nu^{\phi_0}(B),\ \ \forall B\in\sB(D),
$$ 
which is an extension of a result of Pinsky \cite{Pin} to absorbing symmetric $\alpha$-stable processes.
In \cite[Example 4]{Kw}, he give examples of open sets $D$ such that $m(D)=\infty$ and $X^D$ is intrinsic ultracontractive. 
Applications of the intrinsic ultracontractivity to the Yaglom limit were studied in \cite{KP}, \cite{miura}.

\section{Ergodic properties of SMPs}
In this section, we summarize results on ergodic properties of SMPs. 
Let $E$ be a locally compact separable metric space and $E_\triangle $ the one-point 
compactification of $E$ with adjoined
point $\triangle $. Let $m$ be a positive Radon measure on $E$ with full topological support. 
Let $X=(\Omega,X_t,\bP_x,\zeta)$ be an $m$-SMP.  Here $\zeta$ is the lifetime of $X$, $\zeta=\inf\{t>0:X_t=\Delta\}$. 
Denote by 
$\{p_t;{t\geq 0}\}$ and $\{R_{\alpha};{\alpha>0}\}$ the semigroup and resolvent of $X$: 
$$
p_tf(x)=\bE_x(f(X_t)),\quad R_{\alpha}f(x)=\bE_x\left(\int_0^{\infty}e^{-\alpha t}f(X_t)dt\right).
$$
In this section, we further assume that $X$ is conservative, $\bP_x(\zeta=\infty)=1$, and satisfies  

\medskip
	{\bf (I) (Irreducibility)} \ If a Borel set $A$ is $p_t$-invariant, that is, 
	$p_t(1_Af)(x)=1_Ap_tf(x)$\ $m$-a.e. for any $f\in
	L^2(E;m)\cap b{\mathscr B}(E)$  
	and $t>0$, then $A$ satisfies either $m(A)=0$ or $m(E\setminus A)=0$. 
	Here $b{\mathscr B}(E)$ is the space of 
	bounded Borel functions on
	$E$. 

\medskip
%if a Borel set $A$ satisfies $p_t(1_Af)(x)=1_Ap_tf(x),\ m$-a.e. 
%for any $f\in L^2(E;m)\cap {b\sB}(E)$ and any $t>0$, then $m(A)=0$ or $m(E\setminus A)=0$. Here $b\sB(E)$ is the set of bounded Borel functions
%on $E$.
The symmetry of $X$ enables us to strengthen the ergodic theorem as follows: Suppose $m(E)<\infty$. For $f\in L^\infty(E;m)$
\begin{equation}\la{s-e}
p_tf(x)\rightarrow \frac{1}{m(E)}\int_Ef(x)dm,\quad m\text{-a.e.}\ x.
\end{equation}
Following the argument in \cite{F-erg}, we will give a proof of (\ref{s-e}).
%On account of the symmetry, we have the ergodic theorem due to Fukushima \cite{F-erg}.

\bt 
Suppose $m(E)<\infty $. For any $f\in L^\infty(E;m)$, 
there exists a function $g$ in $L^\infty(E;m)$ such that 
$$
\lim_{t\to\infty}p_tf=g,\ \ m\text{-a.e. and in} \ L^1(E;m).
$$
Moreover, $g$ is $p_t$-invariant, $p_tg=g,\ m\text{-a.e.}$
\la{erg}
\et

\begin{proof}
	Define ${\mathscr  G}_t=\sigma \{X_s\mid s\geq t\}$ and $Y_t=\bE_m(f(X_0)|{\mathscr  G}_t)$, where $\bP_m(\cdot)=\int_E\bP_x(\cdot)dm(x)$.
	By the time reversibility of $X_t$ with respect to $\bP_m$, $Y_t=p_tf(X_t)$, $\bP_m$-a.e., 
	and so
	$$
	\bE_m(Y_t|\sF_0)=\bE_m(p_tf(X_t)|\sF_0)=p_{2t}f(X_0),\ \ \bP_m\text{-a.e. }
	$$
Here $\sF_0=\sigma \{X_0\}$.
	Since $f(X_0)\in L^1(\bP_m)$ and $Y_t$ is a reversed martingale, 
	$$
	\lim_{t\to\infty }Y_t=\bE_m(f(X_0)|\cap_{t>0}{\mathscr  G}_t), \ \bP_m\text{-a.e. and in}\ L^1(\bP_m)
	$$
(cf. \cite[Theorem:II.51.1]{RW}). Put $Z=\bE_m(f(X_0)|\cap_{t>0}{\mathscr  G}_t)$. 
	Noting that $|Y_t|\leq \|f\|_\infty,\ \bP_m\text{-a.e.}$ by 
	the definition of $Y_t$, we see from the conditional bounded convergence theorem (cf. \cite[II.40.41, (41)(g)]{RW}) that	
	\begin{align}\la{ue}
	\lim_{t\to\infty }p_{2t}f(X_0)&=\lim_{t\to\infty }\bE_m(Y_t|\sF_0)\\
	&=\bE_m(Z|\sF_0)=\bE_{X_0}(Z),\ \ \bP_m\text{-a.e. and in} \ L^1(\bP_m).\nonumber
	\end{align}
	Put $g(x)=\bE_x(Z)$. We then see from (\ref{ue}) that $\lim_{t\to\infty}p_tf=g$, $m$-a.e. and  in $L^1(E;m)$. The $p_t$-invariance of $g$ follows from 
	$$
	p_tg=\lim_{s\to\infty}p_t(p_sf)=\lim_{s\to\infty}p_{t+s}f=g,\ \ m\text{-a.e.},
	$$
which completes the proof.
\end{proof}

%Though the function $f$ in Theorem \ref{fuku-e} is suppose to be bounded
%in \cite{Fu},this condition can be easily removed by approximating bounded $L^1$-functions.
\begin{thm}{\rm (\cite{F-erg})} \label{fuku-e} Assume $m(E)<\infty $.  
	If the Markov process $X$ is irreducible and conservative, then for $f\in L^\infty(E;m)$
	\bequ\la{const}
	\lim_{t\to\infty }p_{t}f(x)=\frac{1}{m(E)}\int_Efdm, \ \ m\text{-a.e. and in} \ L^1(E;m)
	\eequ
	\label{ae-limit}
\end{thm}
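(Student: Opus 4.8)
The plan is to build directly on Theorem \ref{erg}, which already supplies a function $g\in L^\infty(E;m)$ with $p_tf\to g$ both $m$-a.e.\ and in $L^1(E;m)$, and with $g$ being $p_t$-invariant as a function, $p_tg=g$ $m$-a.e. What remains is to use the two extra hypotheses: \emph{irreducibility}, to force $g$ to be $m$-a.e.\ constant, and \emph{conservativeness} (together with symmetry), to identify that constant as the average $\frac{1}{m(E)}\int_E f\,dm$.

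The key step is to promote the functional invariance $p_tg=g$ to an invariance of the super-level sets of $g$, since the irreducibility hypothesis is phrased for invariant \emph{sets}. I would do this by a Jensen-equality argument. Fix a strictly convex $\Phi$, say $\Phi(s)=s^2$. Because $X$ is conservative, $p_t(x,\cdot)$ is a probability kernel, so Jensen's inequality gives $\Phi\bigl(p_tg(x)\bigr)\le p_t(\Phi\circ g)(x)$ for $m$-a.e.\ $x$; combined with $p_tg=g$ this reads $\Phi(g)\le p_t(\Phi\circ g)$ $m$-a.e. Integrating against $m$ and using the mass identity $\int_E p_th\,dm=\int_E h\,dm$ (which follows from symmetry together with $p_t1=1$, valid here since $m(E)<\infty$ makes $1\in L^2$), I get $\int_E\Phi(g)\,dm\le\int_E p_t(\Phi\circ g)\,dm=\int_E\Phi(g)\,dm$. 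Hence the Jensen inequality is an equality $m$-a.e., and strict convexity of $\Phi$ forces $g(y)=g(x)$ for $p_t(x,\cdot)$-a.e.\ $y$, for $m$-a.e.\ $x$. Consequently every super-level set $A_c=\{g>c\}$ satisfies $p_t(1_{A_c}h)=1_{A_c}p_th$ $m$-a.e., i.e.\ $A_c$ is $p_t$-invariant. Irreducibility then yields $m(A_c)=0$ or $m(E\setminus A_c)=0$ for each $c$, so the distribution function of $g$ is a single jump and $g$ equals an $m$-a.e.\ constant $\kappa$.

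To pin down $\kappa$, I would pass to the limit in $\int_E p_tf\,dm=\int_E f\,dm$ (again by symmetry and conservativeness), which is legitimate because the convergence $p_tf\to g$ also holds in $L^1(E;m)$. This gives $\kappa\,m(E)=\int_E g\,dm=\int_E f\,dm$, hence $\kappa=\frac{1}{m(E)}\int_E f\,dm$, establishing both the $m$-a.e.\ and $L^1$ convergence asserted in (\ref{const}).

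The main obstacle is the middle step: converting $p_tg=g$ into set invariance so that irreducibility can be invoked. This is precisely where symmetry is essential, since it is the identity $\int_E p_th\,dm=\int_E h\,dm$ that upgrades the pointwise Jensen inequality to an $m$-a.e.\ equality; without it one cannot rule out a nonconstant invariant $g$. Everything else—the reversed-martingale convergence producing $g$ in Theorem \ref{erg}, and the final identification of the constant—is routine once the level sets are known to be invariant.
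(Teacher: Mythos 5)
Your proof is correct, and its overall skeleton coincides with the paper's: both start from Theorem \ref{erg}, use irreducibility to force the limit function $g$ to be $m$-a.e.\ constant, and identify the constant via $(p_tf,1)_m=\int_E f\,dm$ together with the $L^1$ convergence. The genuine difference is in the middle step. The paper disposes of it in one line by citing \cite[Theorem 2.1.11]{CF} (see also \cite[Theorem 1]{K}) for the fact that, under irreducibility, a bounded $p_t$-invariant function is $m$-a.e.\ constant; you instead prove this inline by a Jensen-equality argument: conservativeness makes $p_t(x,\cdot)$ a probability kernel, so $g^2=(p_tg)^2\le p_t(g^2)$ $m$-a.e.; symmetry plus $p_t1=1$ (with $m(E)<\infty$ ensuring $g^2\in L^1$ and $1\in L^2$) yields $\int_E p_t(g^2)\,dm=\int_E g^2\,dm$, forcing a.e.\ equality in Jensen; strict convexity then makes the variance of $g$ under $p_t(x,\cdot)$ vanish for $m$-a.e.\ $x$, whence each super-level set $\{g>c\}$ is $p_t$-invariant precisely in the sense of Condition (I), and the rational-levels argument pins $g$ to a constant. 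The details all check: $p_tg=g$ holds for every $t>0$ by Theorem \ref{erg}, so the level-set invariance holds for all $t$ as Condition (I) requires, and the final identification of the constant is exactly the paper's step $(p_tf,1)_m=\int_E f\,dm$. What the paper's route buys is brevity; what yours buys is self-containedness---your Jensen/variance computation is in essence a proof of the cited Liouville-type result---and it makes explicit where symmetry and conservativeness (i.e.\ the invariance of $m$) actually enter, namely in upgrading the pointwise Jensen inequality to an $m$-a.e.\ equality so that the set-theoretic irreducibility hypothesis can be applied.
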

\begin{proof}
	By combining Theorem  \ref{erg} with \cite[Theorem 2.1.11]{CF}(see also \cite[Theorem 1]{K}), we see $\lim_{t\to\infty }p_{t}f$ is constant $m$-a.e.
	Since $(p_{t}f,1)_m=\int_Efdm$, the constant is equal to the right-hand side of (\ref{const}).
\end{proof}

\br \la{any} \rm
Suppose that $X$ satisfies the absolute continuity condition:

\medskip
{\bf (AC)} \  $p_t(x,dy)=p_t(x,y)m(dy)$, $\forall t>0,\ \forall x\in E$.

\medskip
\noindent
Then ``$m$-a.e. $x$" in Theorem \ref{fuku-e} can be strengthened to ``all $x$". Indeed, for any $x\in E$
\vspace{-2mm}
\begin{align*}
	\lim_{t\to\infty }p_tf(x)&=	\lim_{t\to\infty }\int_Ep_1(x,y)\left(\int_Ep_{t-1}(y,z)f(z)dm(z) \right) dm(y)\\
	&=\int_Ep_1(x,y)\lim_{t\to\infty }\left(\int_Ep_{t-1}(y,z)f(z)dm(z) \right) dm(y)\\
	&=\int_Ep_1(x,y)\left(\frac{1}{m(E)}\int_Efdm\right) dm(y)=\frac{1}{m(E)}\int_Efdm.
\end{align*}

\er

\begin{cor}\label{in-e} Suppose the assumptions of Theorem \ref{fuku-e} hold. Assume, in addition, {\bf (AC)} and 
	the ultracontractivity, $\|p_t \|_{1,\infty} \leq c_t<\infty$. 
	Here  $\|\cdot\|_{1,\infty}$ is the operator norm from 
	$L^1(E;m)$ to $L^\infty(E;m)$. Then for $f\in L^1(E;m)$ 
	\bequ\la{ka}
	\lim_{t\to\infty }p_{t}f(x)=\frac{1}{m(E)}\int_Efdm, \ \ \forall x\in E.
	\eequ
\end{cor}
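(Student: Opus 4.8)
The plan is to reduce the $L^1$ assertion to the $L^\infty$ assertion already established, in Remark \ref{any}, for all $x$. The extra hypothesis of ultracontractivity is there precisely to serve as a smoothing device: it maps $L^1(E;m)$ boundedly into $L^\infty(E;m)$, so after running the semigroup for a fixed unit of time an arbitrary integrable $f$ becomes bounded, and the already-known pointwise convergence for bounded functions takes over. So the first thing I would do is peel off one time step and apply the earlier result to what remains.

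Concretely, fix $f\in L^1(E;m)$ and set $g=p_1f$. By the ultracontractivity bound,
\begin{equation*}
\|g\|_\infty=\|p_1f\|_\infty\le\|p_1\|_{1,\infty}\|f\|_1=c_1\|f\|_1<\infty,
\end{equation*}
so $g\in L^\infty(E;m)$. For $t>1$ the semigroup property gives $p_tf=p_{t-1}(p_1f)=p_{t-1}g$ (valid pointwise for every $x$ via the (AC) kernel, with Fubini justified by the same operator bound). Since the hypotheses of Theorem \ref{fuku-e} hold and (AC) is assumed, Remark \ref{any} applies to the bounded function $g$ and yields, for every $x\in E$,
\begin{equation*}
\lim_{t\to\infty}p_tf(x)=\lim_{t\to\infty}p_{t-1}g(x)=\frac{1}{m(E)}\int_E g\,dm.
\end{equation*}
It then remains only to identify this constant with $\frac{1}{m(E)}\int_E f\,dm$.

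For that last step I would invoke $m$-symmetry together with conservativeness: since $X$ is conservative, $p_11=1$ $m$-a.e., and hence $\int_E g\,dm=(p_1f,1)_m=(f,p_11)_m=(f,1)_m=\int_E f\,dm$. Substituting this into the previous display gives (\ref{ka}) for all $x\in E$. I do not expect a deep obstacle here; the entire content is the ultracontractive reduction, and the only points needing care are that $p_1f$ is genuinely bounded (immediate from $\|p_1\|_{1,\infty}=c_1$) and that the total mass is preserved under $p_1$, which is exactly where conservativeness and symmetry enter. A minor bookkeeping remark is that $g=p_1f$ should be taken in the pointwise version supplied by (AC) before Remark \ref{any} is applied, which is harmless since that conclusion holds for every representative.
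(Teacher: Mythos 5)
Your proposal is correct and coincides with the paper's own proof: both peel off one time step via $p_tf=p_{t-1}(p_1f)$, use ultracontractivity to get $p_1f\in L^\infty(E;m)$ so that Theorem \ref{fuku-e} and Remark \ref{any} apply pointwise, and then identify the limiting constant through $\int_E p_1f\,dm=\int_E p_11\cdot f\,dm=\int_E f\,dm$ by symmetry and conservativeness. Your added care about the pointwise (AC) representative of $p_1f$ is a harmless refinement of the same argument.
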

\begin{proof}
For $f\in L^1(E;m)$, $p_1f\in L^\infty(E;m)$ by the ultracontractivity. Hence 
	$$
	\lim_{t\to\infty }p_{t}f(x)=	\lim_{t\to\infty }p_{t-1}(p_1f)(x)=\frac{1}{m(E)}\int_Ep_1fdm,\ \ \forall x\in E
	$$	
	by Theorem \ref{fuku-e} and Remark \ref{any}. By the symmetry 
	with respect to $m$ 
	and conservativeness of $p_1$
	$$
	\int_Ep_1fdm=\int_Ep_11\cdot fdm=\int_Efdm, 
	$$
	 and (\ref{ka}) is proved.
\end{proof}

\section{Quasi-stationary distribution}\label{sec:quasi stationary}
In this section, we consider the existence and uniqueness of quasi-stationary distributions. 
%as an application of Fukushima's ergodic theorem.
We assume that $X$ possesses the next three properties:

\smallskip
%\noindent
\begin{enumerate}
	\renewcommand{\theenumi}{$({\bf \Roman{enumi}})$}
	\renewcommand{\labelenumi}{\theenumi}
	\item\label{item:1} %[\textup{$({\bf I})$}]
	({\bf Irreducibility}) 
	%\ If a Borel set $A$ is $p_t$-invariant, i.e., 
%	$p_t(1_Af)(x)=1_Ap_tf(x)$\ $m$-a.e. for any $f\in
%	L^2(E;m)\cap b{\mathscr B}(E)$  
%	and $t>0$, then $A$ satisfies either $m(A)=0$ or $m(E\setminus A)=0$. 

\smallskip	
	\item\label{item:2} %[\textup{$({\bf II})$}] 
	({\bf Strong Feller Property}) \  For each $t>0$,
	$p_t(b{\mathscr  B}(E))\subset bC(E)$, 
	where $bC(E)$ is the space of bounded continuous functions on $E$.
	
	\smallskip	
	\item\label{item:3} %[\textup{$({\bf III})$}]
	({\bf Tightness})  
	\ For any $\epsilon>0$, there exists a compact set $K$ such that 
	\begin{equation*}
	\sup_{x\in E}R_11_{K^c}(x)\leq \epsilon.
	\end{equation*}
%	Here $1_{K^c}$ is the indicator function
%	of the complement of the compact set $K$. 
\end{enumerate}
A SMP with three properties above is 
said to be in {\it Class (T)}. Note that Conditon ({\bf II}) implies {\bf (AC)}.

We see that if $X$ is not conservative, 
the tightness property 
implies a fast explosion in a sense that the lifetime $\zeta $ is exponentially integrable. In particular,
$X$ is almost surely killed, $\bP_x(\zeta<\infty)=1$ for any $x\in E$. Indeed, let  $({\mathcal E},\cD(\cE))$ be
the Dirichlet form on $L^2(E;m)$ generated by
$X$: 
\begin{equation}\label{D-form}
\left\{
\begin{split}
& \cD(\cE)=\left\{u\in L^2(E;m)\, \Big|\, \lim_{t\to
	0}\frac{1}{t}(u-T_tu,u)_m<\infty \right\} \\
& \cE(u,v)=\lim_{t\to 0}\frac{1}{t}(u-T_tu,v)_m.
\end{split}
\right.
\end{equation}
We define 
$$
\lambda _0= \inf \{ \cE(u,u)\, \mid\, u \in \cD(\cE),\ \| u \|_2 =1 \},
$$
where $\| \cdot \|_2$ is the $L^2(E;m)$-norm.  
We then see in \cite[Corollary 3.8]{T8} that $\lambda _0>0$ and 
for $0<\gamma<\lambda_0$
\bequ\la{ee}
\sup_{x\in E}\bE_x(e^{\gamma \zeta})<\infty.
\eequ
%In particular, $X$ is surely killed;
%$$
%\bP_x(\zeta<\infty)=1,\ \forall x\in E.
%$$
In the sequel, we assume that $X$ is a explosive SMP in Class (T).
%%%%%%%%%%%%%%%%%%%%%%%%%%%%%%%%%%%%%%%%%%%%%%%%%%%%%%%%%%%%%%%%%%%%%%%%%

\medskip
A probability measure $\nu$ on $E$ is said to be \textit{quasi-stationary distribution} (QSD for short) of X if for all $t \ge 0$ and all Borel set $B\in \sB(E)$, 
\[
\nu(B) = \bP_{\nu}(X_t \in B \,|\, t<\zeta) \left( = \frac{\bP_{\nu}(X_t \in B)}{\bP_{\nu}(X_t\in E)} \right),
\]
where $\bP_\nu(\cdot)=\int_E\bP_x(\cdot)d\nu(x)$.
QSDs capture the long-time behavior of surely killed process $X$ 
when $X$ is conditioned to survive. 

A function $\phi_0$ on $E$ is called a \textit{ground state} of $(\cE, \cD(\cE))$
if $\phi_0 \in \cD(\cE),\, \| \phi_0 \|_2 = 1$ and $\lambda _0=\cE(\phi_0, \phi_0)$.
The ground state $\phi_0$ exists because the embedding of 
$(\cE_1, \cD(\cE))$ into $L^2(E;m)$ is compact
 (\cite[Theorem 2.1]{T-G}). Here $\cE_1=\cE+(\ ,\ )_m$.

\bl\la{kl}
For a Borel set $B\subset E$ with $m(B)>0$, define
\bequ\la{k}
\lambda^B _0= \inf \left\lbrace \cE(u,u)+\int_Bu^2dm\, \Big|\, u \in \cD(\cE),\ \| u \|_2 =1 \right\rbrace .
\eequ
Then it holds that $\lambda^B _0>\lambda _0$.
\el
\begin{proof}
	There exists a minimizer $\phi^B_0$ attaining the infimum in (\ref{k}) by \cite[Theorem 2.1]{T-G}.
	Hence
	\begin{align*}
	\lambda^B _0&= \cE(\phi^B_0,\phi^B_0)+\int_B(\phi^B_0)^2dm>\cE(\phi^B_0,\phi^B_0)\geq	\cE(\phi_0,\phi_0)=\lambda_0.
	\end{align*}
\end{proof}

\begin{prop}\la{b-c}{\rm (\cite{T-C})}
	The ground state $\phi_0$ has a bounded continuous version with 
	$\phi_0(x)>0$ for any $x\in E$. 
\end{prop}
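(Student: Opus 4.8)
The plan is to read the bounded continuous part off the spectral picture already available in Class (T), and to extract strict positivity from $m$-symmetry together with irreducibility. Throughout I use that the ground state, being the minimizer of the Rayleigh quotient that defines $\lambda_0$, is the principal eigenfunction: it satisfies $\cE(\phi_0,v)=\lambda_0(\phi_0,v)_m$ for every $v\in\cD(\cE)$, equivalently
$$
p_t\phi_0=e^{-\lambda_0 t}\phi_0\quad m\text{-a.e.},\qquad t>0 .
$$
Since $X$ is in Class (T), $p_t$ is compact on $L^2(E;m)$ and the tightness property forces every $L^2$-eigenfunction to be bounded; this is \cite[Theorem 4.3]{T-C} (compare the discussion around (\ref{ee})), so $\phi_0\in L^\infty(E;m)$. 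Continuity is then immediate: choosing a bounded Borel version and applying the strong Feller property to the identity $\phi_0=e^{\lambda_0}p_1\phi_0$ shows that $\phi_0$ agrees $m$-a.e.\ with a bounded continuous function, and I henceforth replace $\phi_0$ by this version. In particular, since $m$ has full support, the pointwise eigenequation $p_t\phi_0(x)=e^{-\lambda_0 t}\phi_0(x)$ then holds for every $x\in E$ (two continuous functions equal $m$-a.e.\ coincide).

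Next I would prove $\phi_0>0$ $m$-a.e. Replacing $\phi_0$ by $|\phi_0|$ if necessary (as $\cE$ is a Dirichlet form, $|\phi_0|\in\cD(\cE)$ with $\cE(|\phi_0|,|\phi_0|)\le\cE(\phi_0,\phi_0)$ and $\||\phi_0|\|_2=1$, so $|\phi_0|$ is again a minimizer), I may assume $\phi_0\ge0$, and $\phi_0\not\equiv0$ because $\|\phi_0\|_2=1$. Write $A=\{\phi_0=0\}$ and $B=\{\phi_0>0\}$. The eigenequation gives $p_t\phi_0=0$ $m$-a.e.\ on $A$, and since the integrand is nonnegative this forces $p_t(x,B)=0$ for $m$-a.e.\ $x\in A$; the $m$-symmetry of the kernel $p_t(x,y)$ (recall (II) implies (AC)) then yields $p_t(x,A)=0$ for $m$-a.e.\ $x\in B$ as well. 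Hence $B$ is $p_t$-invariant in the sense of condition (I), and irreducibility gives $m(A)=0$ or $m(B)=0$. As $m(B)>0$, we conclude $\phi_0>0$ $m$-a.e.

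Finally I would upgrade this to strict positivity everywhere. Fix $x\in E$ and suppose $\phi_0(x)=0$. The pointwise eigenequation and (AC) give
$$
0=\phi_0(x)=e^{\lambda_0 t}\int_E p_t(x,y)\phi_0(y)\,dm(y),
$$
and since $\phi_0>0$ $m$-a.e.\ this forces $p_t(x,y)=0$ for $m$-a.e.\ $y$, for every $t>0$; hence $p_t1(x)=0$ for all $t>0$ and therefore $R_11(x)=\int_0^\infty e^{-t}p_t1(x)\,dt=0$. But $R_11(x)=\bE_x(1-e^{-\zeta})>0$ because $\zeta>0$ $\bP_x$-a.s., a contradiction. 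Thus $\phi_0(x)>0$ for every $x\in E$.

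The main obstacle is precisely this last passage from ``$\phi_0>0$ $m$-a.e.''\ to ``$\phi_0>0$ everywhere'': the $m$-a.e.\ statement is soft and follows from irreducibility alone, but removing the exceptional $m$-null set $\{\phi_0=0\}$ requires genuinely using the structure of the model, namely the absolute continuity built into the strong Feller property (so that the $m$-null set is invisible to the kernel integral) together with the elementary but indispensable fact that the lifetime is strictly positive, $\bP_x(\zeta>0)=1$. I would also be careful that the $m$-a.e.\ positivity itself rests on the symmetry-plus-irreducibility dichotomy for invariant sets rather than on any pointwise Harnack-type bound, which is why $m$-symmetry is used at exactly that point.
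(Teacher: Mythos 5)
Your argument is correct, but be aware that the paper itself does not prove Proposition \ref{b-c} at all: the statement is imported wholesale from \cite{T-C} (the introduction quotes \cite[Theorems 4.3, 5.4]{T-C} for the bounded continuous version, and positivity is likewise part of the cited result). So your reconstruction does genuine work that this paper delegates, with one exception: you too outsource boundedness to \cite[Theorem 4.3]{T-C}, which is the hard analytic core (tightness implies exponential integrability of exit times, compactness of $p_t$, and boundedness of eigenfunctions), so your proof should be read as ``given boundedness, the rest follows from the strong Feller property, irreducibility and {\bf (AC)}.'' That decomposition is sound, and each step checks out: the bootstrap $\phi_0=e^{\lambda_0}p_1\phi_0$ yields the bounded continuous version and, since $m$ has full support, the pointwise eigenequation; the invariant-set dichotomy for $B=\{\phi_0>0\}$ is correctly executed, the symmetry of the kernel being exactly what converts $p_t(x,B)=0$ for $m$-a.e.\ $x\in A$ into $p_t(y,A)=0$ for $m$-a.e.\ $y\in B$; and the pointwise upgrade via $R_11(x)=\bE_x\left(1-e^{-\zeta}\right)>0$ is a clean way to use $\bP_x(\zeta>0)=1$, matching in spirit the identity $\phi_0=R^{\lambda_0,K}(\phi_01_K)$ that the paper later uses (Lemma \ref{b-e}) to pass from $m$-a.e.\ statements to all $x$. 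One small loose end: the step ``replace $\phi_0$ by $|\phi_0|$'' proves that \emph{some} nonnegative ground state is everywhere positive, not that \emph{the} ground state $\phi_0$ of the statement is. To close this, either adopt the normalization $\phi_0\geq 0$ (implicit in the paper's phrase ``principal eigenfunction''), or note that $\phi_0^{+}$ and $\phi_0^{-}$, being linear combinations of the minimizers $\phi_0$ and $|\phi_0|$, lie in the $\lambda_0$-eigenspace; if both were nonzero, your dichotomy argument would make each of them $m$-a.e.\ positive, contradicting $\phi_0^{+}\phi_0^{-}=0$. Hence $\phi_0$ has constant sign (and $\lambda_0$ is simple), and your conclusion stands.
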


For a compact set $K$ with $m(K)>0$, define
\begin{align*}
p^K_tf(x)&=\bE_x\left( e^{-\int_0^t1_K(X_s)ds}f(X_t)\right),\ t\geq 0,\\  	
R_\beta^{\lambda_0,K}f(x)&=\bE_x\left( \int_0^\zeta e^{-\beta t+\lambda_0t}p^K_tf(x)dt\right),\ \beta\geq 0.
\end{align*}
We denote $R^{\lambda_0,K}$ for $R_0^{\lambda_0,K}$ simply.
\bl\la{fini}
It holds that 
$
\sup_{x\in E}R^{\lambda_0,K}1(x)<\infty.
$
\el

\begin{proof}
	By the $L^p$-independence of the growth bound of $p^K_t$
	(\cite[Theorem 1.3]{CKK}),   for any $\delta>0$ there exists a positive constant $C(\delta)$ 
	such that
	$$
	\sup_{x\in E}p^K_t1(x)=\|p^K_t1\|_\infty\leq C(\delta)e^{-(\lambda^K_0-\delta)t}.
	$$
	Since $\lambda^K _0>\lambda_0$ by Lemma \ref{kl}, for $0<
	\delta<\lambda^K _0-\lambda_0$
	\begin{align*}
	\|R^{\lambda_0,K}1\|_\infty&\leq\int_0^\infty e^{\lambda_0t}\sup_{x\in E}p^K_t1(x)dt	
	\leq C(\delta)\int_0^\infty e^{-(\lambda^K_0-\lambda_0-\delta)t}dt\\
	&=\frac{C(\delta)}{\lambda^K_0-\lambda_0-\delta}<\infty.
	\end{align*}
\end{proof} 

We define symmetric bilinear forms on $L^2(E;m)$: For $u\in\cD(\cE)$
\begin{align*}
\cE^{\lambda_0}(u,u)&=\cE(u,u)-\lambda_0\int_Eu^2dm, \\
\cE^{\lambda_0,K}(u,u)&=\cE(u,u)-\lambda_0\int_Eu^2dm+\int_Eu^21_Kdm.
\end{align*}
For a general symmetric bilinear form $\cA$, $\cA_\beta$ denotes $\cA+\beta(\ ,\ )_m$. 	

\bl\la{b-e}
The ground state $\phi_0$ satisfies $\phi_0(x)=R^{\lambda_0,K}(\phi_01_K)(x)$ for all $x\in E$.
\el
\begin{proof}
	For $\varphi\in b\sB_0^+(E)$, the set of non-negative bounded functions with compact support,
	\begin{align*}
	\cE^{\lambda_0,K}(R^{\lambda_0,K}_\beta\varphi,R^{\lambda_0,K}_\beta\varphi)
	&\leq\cE^{\lambda_0,K}_\beta(R^{\lambda_0,K}_\beta
	\varphi,R^{\lambda_0,K}_\beta\varphi)\\
	&=\int_E\varphi R^{\lambda_0,K}_\beta
	\varphi dm\leq\int_E\varphi R^{\lambda_0,K}
	\varphi dm<\infty
	\end{align*}
	by Lemma \ref{fini}. Since $R^{\lambda_0,K}_\beta\varphi\uparrow R^{\lambda_0,K}\varphi$ as $\beta\downarrow 0$,
	 the function $R^{\lambda_0,K}\varphi$ belongs to 
	the extended Schr\"odinger space $\cD_e(\cE^{\lambda_0,K})$ (For the definition of extended Schr\"odinger space, see \cite[Section 2]{T-I}).

By the definition of $\cE^{\lambda_0,K}$,
$$
	\cE^{\lambda_0,K}(\phi_0,R^{\lambda_0,K}_\beta\varphi)= \cE^{\lambda_0}(\phi_0,R^{\lambda_0,K}_\beta\varphi) +\int_E1_K\phi_0. R^{\lambda_0,K}_\beta\varphi dm.
$$	
Noting that $\cD_e(\cE^{\lambda_0,K})\subset\cD_e(\cE^{\lambda_0})$
because $\cE^{\lambda_0}(u,u)\leq\cE^{\lambda_0,K}(u,u)$, we have 
		\bequ\la{al}
	\cE^{\lambda_0,K}(\phi_0,R^{\lambda_0,K}\varphi)= \cE^{\lambda_0}(\phi_0,R^{\lambda_0,K}\varphi) +\int_E1_K\phi_0 R^{\lambda_0,K}\varphi dm
	\eequ
as $\beta\to 0$.
%	Let $\psi_n=\phi_0\wedge n$. Then 
%	$\cE_1(\psi_n-\phi_0,\psi_n-\phi_0)\longrightarrow 0$ as $n\to\infty$, and 
%	\begin{align*}
%	\cE^{\lambda_0,K}(\phi_0,R^{\lambda_0,K}\varphi)&= \cE^{\lambda_0}(\phi_0,R^{\lambda_0,K}\varphi) +\int_E1_K\phi_0 R^{\lambda_0,K}
%	\varphi dm
%	\end{align*}

	Since $\phi_0$ is the eigenfunction corresponding to $\lambda_0$, $\cE^{\lambda_0}(\phi_0,R_\beta^{\lambda_0,K}\varphi)=0$
	for any $\beta >0$, and so $\cE^{\lambda_0}(\phi_0,R^{\lambda_0,K}\varphi)=0$. Hence, by (\ref{al}) and the symmetry of $R^{\lambda_0,K}$
	with respect to $m$
	\bequ\la{fir}
	\cE^{\lambda_0,K}(\phi_0,R^{\lambda_0,K}\varphi)=\int_E1_K\phi_0 R^{\lambda_0,K}
	\varphi dm=\int_ER^{\lambda_0,K}(1_K\phi_0 )
	\varphi dm.
	\eequ
%	by the symmetry of $R^{\lambda_0,K}$ with respect to $m$.
	
	On the other hand,
	\begin{align}\la{b-l}
	\cE^{\lambda_0,K}(\phi_0,R^{\lambda_0,K}_\beta\varphi)&= \cE^{\lambda_0,K}_\beta(\phi_0,R^{\lambda_0,K}_\beta\varphi) -\beta\int_E\phi_0 
	R^{\lambda_0,K}_\beta\varphi dm  \\
	&=\int_E\phi_0\varphi dm-\beta\int_E\phi_0 R^{\lambda_0,K}_\beta\varphi dm.\nonumber
	\end{align}
	Since 
	$$
	\int_E\phi_0 R^{\lambda_0,K}_\beta\varphi dm= \int_ER^{\lambda_0,K}_\beta\phi_0 \varphi dm\leq \|\phi_0\|_\infty\|R^{\lambda_0,K}1\|_\infty\int_E\varphi dm<\infty,
	$$
	by Lemma \ref{fini}, we have from (\ref{b-l})
		\bequ\la{sec}
	\cE^{\lambda_0,K}(\phi_0,R^{\lambda_0,K}\varphi)=\int_E\phi_0\varphi dm. 
	\eequ 
	by letting $\beta\to 0$.
	%Taking a sequence $\psi_n=\phi_0\wedge n$ as above, we 
%	see

	By (\ref{fir}) and (\ref{sec}),
	\beqn
\int_{E} R^{\lambda_0,K}(\phi_0 1_K)
	\varphi dm=\int_{E} \phi_0 \varphi dm, \,\  \forall\varphi\in b\sB_0^+(E) 
	\eeqn
	and thus
	\beqn
	\phi_0 =  R^{\lambda_0,K}(\phi_0  1_K),\
	\ m\textrm{-a.e.}
	\eeqn
	By the continuity of both functions,  ``$m$-a.e. $x$" can be strengthen 
	to ``all $x$" .
	
\end{proof}

%Moreover, we can prove 
\bl\la{l1}
The ground state $\phi_0$ belongs to $L^1(E;m)$.
\el

\begin{proof}
	By Lemma \ref{b-e} and the symmetry of $R^{\lambda_0,K}$ with respect to $m$, we see 
	$$
	\int_E\phi_0dm=\int_ER^{\lambda_0,K}(1_K\phi_0)dm=\int_E1_K\phi_0R^{\lambda_0,K}1dm.
	$$
The right hand side is finite by Proposition \ref{b-c} and Lemma \ref{fini}.	
\end{proof}

\bl\la{ab-c}
Let $\mu$ be a QSD. Then $\mu$ is absolutely continuous with respect to
$m$.
\el
\begin{proof} If $m(B)=0$, then
	$$
	\bP_{\mu}(X_t \in B)=\int_E\left( \int_Bp_t(x,y)dm(y)\right) d\mu=0,
	$$
	and thus $\mu(B)=\bP_{\mu}(X_t \in B)/\bP_{\mu}(t<\zeta)=0$.
\end{proof}

%%%%%%%%%%%%%%%%%%%%%%%%%%%%%%%%%%%%%%%%%%%%%%%%%%%%%%%%%%%%%%%%%%%%%%%%%
We define the space ${\mathcal D}^+(A)$ by
$$
{\mathcal D}^+(A)=\left\{R_{\alpha}f\,\mid\, \alpha>0,\ f\in L^2(E;m)\cap bC^+(E),\ \ 
f\not\equiv 0\right\}.
$$
Here $bC^+(E)$ is the set of non-negative bounded continuous functions.
 For $\phi=R_{\alpha}g\in{\mathcal D}^+(A)$ define the multiplicative functional $L^\phi$ by
\begin{equation}	\label{L}
L^\phi _t=\frac{\phi (X_t)}{\phi (X_0)}\exp\left(-\int_0^t\frac{A\phi }{\phi }(X_s)ds\right)1_{\{t<\zeta\} },\ \ A\phi =
\alpha \phi-g.
\end{equation}
Let ${X}^{\phi}=(\Omega,X_t,\bP^{\phi}_x,\zeta)$ the transformed process of $X$ by $L^{\phi}_t$ and denote by $p_t^{\phi}$ its semigroup, $p_t^{\phi}f(x)=\bE_x(L^{\phi}_tf(X_t))$. 
We then see from Lemma 6.3.2 in \cite{FOT} that $X^{\phi}
$ is an irreducible, conservative $\phi^2m$-SMP on $E$, 
$(p_t^{\phi}f,g)_{\phi^2m}=(f, p_t^{\phi}g)_{\phi^2m}$ (In
\cite{CFTYZ} this fact is extended to $\phi\in\cD(\cE)$). 
Since $\phi_0\in{\mathcal D}^+(A)$ and $A\phi_0=-\lambda_0\phi_0$, $L^{\phi_0}_t$ in (\ref{L}) is simply written as 
\begin{equation}
L^{\phi_0} _t=e^{\lambda_0 t}\frac{\phi_0 (X_t)}{\phi_0 (X_0)}1_{\{t<\zeta\} }.
\label{Lt}
\end{equation}
Hence the following equalities hold: 
\begin{align}
p^{\phi _0}_tf(x)&=e^{\lambda _0t}\frac{1}{\phi _0(x)}{\mathbb
	E}_x\left(\phi _0(X_t)f(X_t)\right)=e^{\lambda _0t}\frac{1}{\phi _0(x)}p_t(\phi_0f)(x)\nonumber
\label{phi-semi}
\end{align}
and so
\begin{equation}
p_tf(x)=e^{-\lambda _0t}{\phi _0(x)}p_t^{\phi_0}\left(\frac{f}{\phi _0}\right)(x). 
\label{j-phi-semi}
\end{equation}

We see from Lemma \ref{l1} that the probability measure $\nu^{\phi_0}$ can be defined by
\begin{equation}\label{qsd}
\nu^{\phi_0}(B) = \frac{\int_B \phi_0 \, dm}{\int_E \phi_0 \, dm}.
\end{equation}

\bl\la{nu}
The measure $\nu^{\phi_0}$ is a QSD of X. 	
\el
\begin{proof} 	
By (\ref{j-phi-semi}) 
\begin{align*}
\bP_{\nu^{\phi_0}}(X_t\in B)&=\frac{\int_E\bP_x(X_t\in B)\phi_0(x)dm}{\int_E\phi_0(x)dm}\\
&=\frac{e^{-\lambda_0t}\int_E\bE_x^{\phi_0}{((1_B/\phi_0)(X_t))\phi_0^2(x)dm}}{\int_E\phi_0(x)dm}.
\end{align*}
Since $X^{\phi_0}$ is $\phi_0^2m$-symmetric and conservative, $p^{\phi_0}_t1=1$,
\begin{align*}
&\int_E\bE_x^{\phi_0}{((1_B/\phi_0)(X_t))\phi_0^2(x)dm}=\int_Ep^{\phi_0}_t(1_B/\phi_0)(x)\phi_0^2(x)dm\\
&\qquad\qquad =\int_E(1_B/\phi_0)(x)p^{\phi_0}_t1(x)\phi_0(x)^2dm=
\int_B \phi_0 \, dm.
\end{align*}
Hence we see 
$$
\bP_{\nu^{\phi_0}}(X_t\in B\,|\,X_t\in E)=\frac{\bP_{\nu^{\phi_0}}(X_t\in B)}{\bP_{\nu^{\phi_0}}(X_t\in E)}=\frac{\int_B \phi_0 \, dm}{\int_E \phi_0 \, dm}=\nu^{\phi_0}(B).
$$
\end{proof}
%$\nu^{\phi_0}$ is a QSD of X. 

%%%%%%%%%%%%%%%%%%%%%%%%%%%%%%%%%%%%%%%%%%%%%%%%%%%%%%%%%%%%%%%%%%%%%%%%%

%(see \cite[Hypothesis (H1)$\sim$(H5)]{Cattiaux-Collet-Lambert-Martinez-SanMartin(09)}).
%For example, one of these assumptions requires that $q$ is differentiable.

\begin{thm}\la{main}
	Assume that $X$ is an explosive SMP in Class (T). 
	Then the measure $\nu^{\phi_0}$ defined in (\ref{qsd}) is the  unique QSD of $X$.
\end{thm}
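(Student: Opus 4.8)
Since Lemma~\ref{nu} already establishes that $\nu^{\phi_0}$ is a QSD, the whole content of the theorem is the \emph{uniqueness}, and the plan is to fix an arbitrary QSD $\mu$ and show that it must coincide with $\nu^{\phi_0}$. First I would record two structural facts about $\mu$. By Lemma~\ref{ab-c}, $\mu\ll m$. Writing $\ell(t)=\bP_\mu(t<\zeta)$, the QSD identity reads $\bP_\mu(X_t\in\cdot)=\ell(t)\,\mu$ as measures on $E$; combining this with the Markov property at time $t$ gives, for all $s,t>0$,
\[
\ell(t+s)=\int_E p_s1(x)\,\bP_\mu(X_t\in dx)=\ell(t)\int_E p_s1\,d\mu=\ell(t)\,\ell(s).
\]
As $\ell$ is non-increasing with $\ell(0)=1$ and $\ell(t)\to 0$ (because $X$ is almost surely killed), this forces $\ell(t)=e^{-\theta t}$ for some $\theta>0$. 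Hence, for every compact set $K$,
\[
\mu(K)=e^{\theta t}\,\bP_\mu(X_t\in K)=\int_E e^{\theta t}\,p_t1_K(x)\,d\mu(x),\qquad\forall\,t>0,
\]
and this is the identity I would pass to the limit $t\to\infty$.

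The heart of the argument is the large-time behaviour of $e^{\lambda_0 t}p_t1_K$. Rewriting it by the ground-state transform (\ref{j-phi-semi}) gives $e^{\lambda_0 t}p_t1_K(x)=\phi_0(x)\,p_t^{\phi_0}(1_K/\phi_0)(x)$. The transformed process $X^{\phi_0}$ is irreducible and conservative and is $\phi_0^2m$-symmetric with $\phi_0^2m(E)=\|\phi_0\|_2^2=1<\infty$, while $1_K/\phi_0\in L^\infty(E;\phi_0^2m)$ because $\inf_{x\in K}\phi_0(x)>0$ by Proposition~\ref{b-c}. Applying Fukushima's ergodic theorem (Theorem~\ref{fuku-e}) to $X^{\phi_0}$ and upgrading the $m$-a.e.\ limit to an every-$x$ limit via the absolute continuity (Remark~\ref{any}), I obtain
\[
\lim_{t\to\infty}p_t^{\phi_0}\!\left(\frac{1_K}{\phi_0}\right)(x)=\int_E \frac{1_K}{\phi_0}\,\phi_0^2\,dm=\int_K\phi_0\,dm,\qquad\forall\,x\in E,
\]
so that $e^{\lambda_0 t}p_t1_K(x)\to\phi_0(x)\int_K\phi_0\,dm$. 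Since $p_t^{\phi_0}$ is Markovian, $p_t^{\phi_0}1=1$, I also get the $t$-uniform bound $e^{\lambda_0 t}p_t1_K(x)\le \phi_0(x)/\inf_{x'\in K}\phi_0(x')$, whose right-hand side is $\mu$-integrable because $\phi_0$ is bounded and $\mu(E)=1$.

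I would then feed these facts into the master identity by writing $e^{\theta t}p_t1_K=e^{(\theta-\lambda_0)t}\bigl(e^{\lambda_0 t}p_t1_K\bigr)$ and treating three cases for $K$ with $m(K)>0$ (so $\int_K\phi_0\,dm>0$). If $\theta>\lambda_0$, the integrand tends to $+\infty$ pointwise, and Fatou forces $\mu(K)=\infty$, contradicting $\mu(K)\le 1$. If $\theta<\lambda_0$, the integrand is dominated by $\phi_0/\inf_{x'\in K}\phi_0(x')$ and tends to $0$, so dominated convergence gives $\mu(K)=0$ for all such $K$, whence $\mu(E)=0$ after a compact exhaustion, again impossible. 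Therefore $\theta=\lambda_0$, and dominated convergence now yields
\[
\mu(K)=\Bigl(\int_E\phi_0\,d\mu\Bigr)\int_K\phi_0\,dm\qquad\text{for every compact }K .
\]
Thus $\mu$ coincides with a fixed multiple of $\phi_0\,m$ on compact sets; by inner regularity this extends to all Borel sets, and normalizing (both $\mu$ and $\phi_0\,m/\!\int_E\phi_0\,dm$ are probability measures) pins down the constant and gives $\mu=\nu^{\phi_0}$.

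The step I expect to be the main obstacle is the justification of the two limit interchanges uniformly in the starting point. Concretely: (i) upgrading Fukushima's $m$-a.e.\ ergodic limit for $X^{\phi_0}$ to a genuinely pointwise (every-$x$) statement --- this is exactly where the absolute continuity enters through Remark~\ref{any}, with the ultracontractivity route through Corollary~\ref{in-e} available as an alternative; and (ii) securing a domination that is uniform in $t$ so that dominated convergence and Fatou can be applied against the probability measure $\mu$ in all three regimes for $\theta$. The conservativeness identity $p_t^{\phi_0}1=1$ is precisely what supplies the uniform bound $e^{\lambda_0 t}p_t1_K\le \phi_0/\inf_{x'\in K}\phi_0(x')$, and it is this bound that controls every case.
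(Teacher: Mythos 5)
Your proof is correct, but it reaches uniqueness by a genuinely different route than the paper. The paper never identifies the survival rate of an arbitrary QSD $\mu$: it bounds $\mu(K)\leq \bP_\mu(X_t\in K)/\bP_\mu(X_t\in F)$ for a second compact set $F$, so that the unknown normalization $\bP_\mu(t<\zeta)$ cancels in the ratio; it then applies (\ref{j-phi-semi}), the ergodic limit for $X^{\phi_0}$ upgraded to every $x$ via Remark \ref{any} (with exactly your uniform bound, supplied by $p_t^{\phi_0}1=1$, justifying the bounded convergence against $\mu$ as in (\ref{as})), obtains $\mu(K)\leq \int_K\phi_0\,dm\big/\int_F\phi_0\,dm$, lets $F\uparrow E$, extends to all Borel sets by inner regularity, and closes with the complementation trick $\mu(B)=1-\mu(B^c)\geq 1-\nu^{\phi_0}(B^c)=\nu^{\phi_0}(B)$. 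You instead first derive the exponential survival law $\bP_\mu(t<\zeta)=e^{-\theta t}$ (this is sound: the QSD identity plus the Markov property give the multiplicative functional equation, and monotonicity plus the implicit positivity $\bP_\mu(t<\zeta)>0$, which is built into the QSD definition, solve it; almost-sure killing gives $\theta>0$), then pin down $\theta=\lambda_0$ by the Fatou/dominated-convergence trichotomy on $e^{(\theta-\lambda_0)t}\phi_0\,p_t^{\phi_0}(1_K/\phi_0)$, using a compact $K$ with $m(K)>0$ (available since $m$ has full topological support, so $\int_K\phi_0\,dm>0$ by Proposition \ref{b-c}), and finally identify $\mu$ directly, normalizing by $\mu(E)=1$. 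Both proofs rest on the same pillars --- the ground-state transform, Fukushima's ergodic theorem with the (AC) upgrade, conservativeness of $X^{\phi_0}$ for the $t$-uniform domination, and inner regularity --- and your limit interchanges are all legitimately justified. The trade-off: the paper's ratio argument is shorter and sidesteps the rate question entirely, while your version costs extra bookkeeping (the Cauchy equation, the three-case analysis) but yields a stronger intermediate result of independent interest, namely that every QSD of $X$ has extinction rate exactly $\lambda_0$, the bottom of the spectrum.
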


\begin{proof} 
	Let $\mu$ is a QSD, i.e. 
	\begin{equation*}
	\mu(B)={\mathbb P}_\mu\left( X_t\in
	B \mid t<\zeta\right)=\frac{{\mathbb P}_\mu\left( X_t\in
		B\right)}{{\mathbb P}_\mu\left( X_t\in
		E\right)}. 
	\end{equation*}
	For compact sets $K,  F\subset E$,
	\begin{align*}
	\mu(K)&=\frac{{\mathbb P}_\mu\left( X_t\in
		K\right)}{{\mathbb P}_\mu\left( X_t\in E\right)}
	\leq\frac{{\mathbb P}_\mu\left( X_t\in
		K\right)}{{\mathbb P}_\mu\left( X_t\in F\right)}=\frac{\int_Ep_t1_Kd\mu}{\int_Ep_t1_Fd\mu}.
	\end{align*}
	By (\ref{j-phi-semi}), the right hand side equals
	$$
	\frac{\int_E\phi_0p_t^{\phi_0}\left(\frac{1_K}{\phi
			_0}\right) d\mu}{\int_E\phi_0p_t^{\phi_0}\left(\frac{1_F}{\phi
			_0}\right) d\mu}.
	$$
	
	Since 
	$$
	\frac{1_K(x)}{\phi_0(x)}\leq \frac{1}{\inf_{x\in K}\phi_0(x)}<\infty,
	$$
	${1_K}/{\phi_0}$ belongs to $L^\infty(E;m)$. 
	Noting $X^{\phi_0}$ satisfies {\bf (AC)} by definition, we see from 
	Remark \ref{any} that
	$$
	\lim_{t\to\infty}p_t^{\phi_0}\left(\frac{1_K}{\phi
		_0}\right)(x) =\int_K\phi_0dm,\ \ \forall x\in E.
	$$
Hence 
	\begin{equation}\label{as} 
	\lim_{t\to\infty}\int_E\phi_0(x)p_t^{\phi_0}\left(\frac{1_K}{\phi
		_0}\right)(x) d\mu(x)=\int_E\phi_0d\mu\int_K\phi_0dm,		
	\end{equation}
	and 
	\begin{align*}
	\mu(K)&\leq \lim_{t\to\infty}\frac{{\mathbb P}_\mu\left( X_t\in
		K\right)}{{\mathbb P}_\mu\left( X_t\in F\right)}=\frac{\int_K\phi_0dm}{\int_F\phi_0dm}.
	\end{align*}
	By letting $F\uparrow E$, $\mu(K)\leq\nu^{\phi_0}(K)$ and by the inner regularity $\mu(B)\leq\nu^{\phi_0}(B)$ for any $B\in\sB(E)$. 
	Noting that
	$$
	\mu(B)=1-\mu(B^c)\geq 1-\nu^{\phi_0}(B^c)=\nu^{\phi_0}(B),
	$$ 
	we can conclude that $\mu=\nu^{\phi_0}$.
\end{proof}

\bl
If $X$ is intrinsic ultracontractive, then
$$
\lim_{t\to\infty}\bP_\nu(X_t \in B\,|\,t<\zeta)=\nu^{\phi_0}(B)
$$
 for any probability measure $\nu$.
\el
\begin{proof}
	Since
	$$
	\bP_\nu(X_t \in B)=e^{-\lambda_0t}\int_E\phi_0p_t^{\phi_0}\left( \frac{1_B}{\phi_0}\right)d\nu,
	$$
	we have 
	$$
	\bP_\nu(X_t \in B\,|\,t<\zeta)=\frac{\int_E\phi_0p_t^{\phi_0}\left( \frac{1_B}{\phi_0}\right) d\nu}{\int_E\phi_0p_t^{\phi_0}\left( \frac{1}{\phi_0}\right) d\nu}.
	$$
Noting that $1/\phi_0\in L^1(\phi_0^2m)$, we have this lemma by  Corollary \ref{in-e}.	
\end{proof}

\begin{exmp} \la{one} \rm 
	Let us consider a one-dimensional diffusion process $X=(X_t, \bP_x,
	\zeta)$ on an open interval $I=(r_1,r_2)$ 
	such that $\bP_x(X_{\zeta-}=r_1\ {\rm or}\ r_2,\
	\zeta<\infty)=\bP_x(\zeta<\infty),\ x\in I,$ and $\bP_a(\sigma_b<\infty)>0$  
	for any $a, b\in I.$    The diffusion $X$ is symmetric with
	respect to its canonical measure $m$ and it satisfies {\rm I} 
	and {\rm II}. 
	The boundary point $r_i$ of $I$ is classified 
	into four classes: {\em regular boundary, exit boundary, entrance
		boundary and natural boundary\/} (\cite[Chapter 5]{Ito}): 
	\begin{enumerate}  
		\item[\textup{(a)}]  If $r_2$ is a regular 
		or exit boundary, then $\lim_{x\to r_2 }R_11(x)=0$. 
		
		\item[\textup{(b)}]  If $r_2$ is an entrance boundary, then $
		\lim_{r\to r_2}\sup_{x\in I}R_11_{(r,r_2)}(x)=0.
		$
		
		\item[\textup{(c)}] If $r_2$ is a natural boundary, then 
		$\lim_{x\to r_2}R_11_{(r,r_2)}(x)=1$ and thus \\
		$\sup_{x\in(r_1,r_2)}R_11_{(r,r_2)}(x)=1$.
	\end{enumerate}
	Therefore, the tightness property {\rm III} is fulfilled if and only if no natural
	boundaries are present. 
	For the intrinsic ultracontractivity of one-dimensional diffusion processes, refer to \cite{Tomi}. 		
\end{exmp}

\begin{exmp}\rm
Let $\cD$ be the family of open sets in $\bR^d$. We set
\begin{equation*}
\cD_0=\left\{D\in\cD\, \Big|\, \lim_{x\in D,|x|\to\infty} m(D\cap B(x,1))=0\right\},
\end{equation*}
where $m$ denotes the Lebesgue measure on $\bR^d$ and $B(x,1)$ the open ball with center $x$ and radius 1. 
Let $X$ be the symmetric 
$\alpha$-stable process, the Markov process generated by $(-\Delta)^{\alpha/2}\,(0<\alpha\leq 2)$. 
We can show by the same argument as in 
\cite[Lemma 3.3]{TTT} that if an open set $D$
belongs to $\cD_0$, then the absorbing process $X^D$ on $D$ is in Class \textrm{(T)}. 
For the intrinsic ultracontractivity of $X^D$, refer to \cite{A}, \cite{KT}, \cite{Kw}. In particular, it is shown in \cite{KT} that for $0<\alpha<2$
$X^D$ is intrinsic ultracontractive for any bounded open set $D$. As a result, 
$$
\lim_{t\to\infty}\bP^D_x\left(X_t\in B\,\mid\,t<\tau_D \right)=\nu^{\phi_0}(B),\ \ \forall B\in\sB(D).
$$ 
%which is an extension of a result of Pinsky \cite{Pin} to absorbing symmetric $\alpha$-stable processes.
 In \cite[Example 4]{Kw}, the author gives an example of open set $D$ such that $m(D)=\infty$ and $X^D$ is intrinsic ultracontractive. 
\end{exmp}

\begin{exmp}\rm
Let $V$ be a positive function in the local Kato class. 
If 
$$
\lim_{|x|\to\infty}m(\{x\in\bR^d\mid V(x)\leq M\})=0\ \text{for any}\ 
M>0,
$$
then the subprocess of the BM by $\exp\left( -\int_0^tV(B_s)ds\right) $ is in Class (T)(\cite{TTT}).
For the intrinsic ultracontractivity of Schr\"odinger semi-groups, refer to \cite{KK}.
\end{exmp}

%    Bibliographies can be prepared with BibTeX using amsplain,
%    amsalpha, or (for "historical" overviews) natbib style.
\bibliographystyle{amsplain}

\end{document}